\newcommand{\ifsodaelse}[2]{\ifthenelse{\isundefined{\SODAF}}{#2}{#1}}
\renewcommand{\hat}{\widehat}
\newcommand\remove[1]{}
\newcommand{\Rad}{{\mathrm{\bf Rad}}}
\DeclareRobustCommand\widecheck[1]{{\mathpalette\@widecheck{#1}}}
\def\@widecheck#1#2{%
    \setbox\z@\hbox{\m@th$#1#2$}%
    \setbox\tw@\hbox{\m@th$#1%
       \widehat{%
          \vrule\@width\z@\@height\ht\z@
          \vrule\@height\z@\@width\wd\z@}$}%
    \dp\tw@-\ht\z@
    \@tempdima\ht\z@ \advance\@tempdima2\ht\tw@ \divide\@tempdima\thr@@
    \setbox\tw@\hbox{%
       \raise\@tempdima\hbox{\scalebox{1}[-1]{\lower\@tempdima\box
\tw@}}}%
    {\ooalign{\box\tw@ \cr \box\z@}}}
\newcommand{\A}{{\mathcal A}}
\newcommand{\tp}{\widehat{\otimes}}
\newcommand{\sign}{\mathrm{sign}}
\renewcommand{\d}{\delta}
\newcommand{\e}{\varepsilon}
\newcommand{\R}{\mathbb{R}}
\newcommand{\E}{\mathbb{E}}
\newcommand{\N}{\mathbb{N}}
\newcommand{\Prob}{\mathrm{Prob}}
\newtheorem{theorem}{Theorem}[section]
\newtheorem{lemma}[theorem]{Lemma}
\newtheorem{corollary}[theorem]{Corollary}
\newtheorem{definition}[theorem]{Definition}
\newcommand{\eqdef}{\stackrel{\mathrm{def}}{=}}
\date{}
\renewcommand{\le}{\leqslant}
\renewcommand{\ge}{\geqslant}
\newcommand\eps{\varepsilon}
\renewcommand{\epsilon}{\varepsilon}
\theoremstyle{remark}
\newcommand{\F}{\mathcal{F}}
\renewcommand{\S}{\mathbb{S}}
\title[LDC and cotype of tensor products]{Locally decodable codes and the failure of cotype for projective tensor products}
\thanks{J. B. was supported by an NWO Vici grant and the EU project QCS. A. N. was supported by NSF grant CCF-0832795, BSF grant
 2010021, the Packard Foundation and the Simons Foundation. Part of this work was completed while A. N. was visiting Universit\'e Pierre et Marie Curie, Paris, France. O. R. was supported by a European Research Council (ERC) Starting Grant. }
\author{Jop Bri\"et}
\address{Centrum Wiskunde \& Informatica (CWI), Science Park 123, 1098 SJ Amsterdam, The Netherlands}
\email{j.briet@cwi.nl}
\author{Assaf Naor}
\address{Courant Institute, New York University, 251 Mercer Street, New York NY 10012, USA}
\email{naor@cims.nyu.edu}
\author{Oded Regev}
\address{{\'E}cole normale sup{\'e}rieure,
D{\'e}partement d'informatique,
45 rue d'Ulm, Paris,
France}
\email{regev@di.ens.fr}
\date{}
\renewcommand{\S}{\mathcal{S}}
\renewcommand{\O}{\mathcal{O}}
\newcommand{\n}{{\|\cdot\|}}
\begin{document}

\begin{abstract}
It is shown that for every $p\in (1,\infty)$ there exists a Banach space $X$ of finite cotype such that the projective tensor product $\ell_p\tp X$ fails to have finite cotype. More generally, if $p_1,p_2,p_3\in (1,\infty)$ satisfy $\frac{1}{p_1}+\frac{1}{p_2}+\frac{1}{p_3}\le 1$ then $\ell_{p_1}\tp\ell_{p_2}\tp\ell_{p_3}$ does not have finite cotype. This is a proved via a connection to the theory of locally decodable codes.
\end{abstract}

\maketitle

\section{introduction}\label{sec:intro}

Throughout this paper all Banach spaces are assumed to be over the reals, though our results apply (with the same proofs) to complex Banach spaces as well. We shall use standard Banach space notation and terminology,  e.g., as in~\cite{AK05}. We shall also use the asymptotic notation $\lesssim,\gtrsim$
to indicate the corresponding inequalities up to universal constant
factors, and we shall denote equivalence up to universal constant
factors by $\asymp$, i.e., $A\asymp B$ is the same as $(A\lesssim
B)\wedge (A\gtrsim B)$.

The {\em projective tensor product} of two Banach spaces $(X,\|\cdot\|_X)$ and $(Y,\|\cdot\|_Y)$, denoted $X\tp Y$, is the completion of their algebraic tensor product $X\otimes Y$, equipped with the norm
\begin{multline*}
\|z\|_{X\tp Y}=\inf\left\{\sum_{i=1}^n \|x_i\|_X\cdot\|y_i\|_Y:\ \exists\, n\in \N,\exists \{(x_i,y_i)\}_{i=1}^n\subseteq X\times Y,\right.\\\left.\mathrm{such\ that\ }z=\sum_{i=1}^n x_i\otimes y_i\right\}.
\end{multline*}
Thus, if $X, Y$ are finite dimensional then the unit ball of $X\tp Y$ is the convex hull in $X\otimes Y$ of all the vectors of the form $x\otimes y$, where $x$ is a unit vector in $X$ and $y$ is a unit vector in $Y$.  To state two concrete examples of this construction, one always has $\ell_1\tp X=\ell_1(X)$, and $\ell_2\tp \ell_2$ can be naturally identified with the Schatten trace class $S_1$, i.e., the space of all compact operators $T:\ell_2\to \ell_2$, equipped with the norm $\|T\|_{S_1}=\mathrm{trace}( \sqrt{T^*T})$. For these facts, and much more information on projective tensor products, we refer to~\cite{Gro53,Rya02,DFS08}.

The literature contains a significant amount of work on the permanence of various key Banach space properties under projective tensor products; see the survey~\cite{DFS03} for some of the known results along these lines. Here we will be mainly concerned with geometric properties of projective tensor products with $L_p(\mu)$ spaces, $p\in (1,\infty)$, in which case examples of known results include that $L_p\tp X$ is weakly sequentially complete iff $X$ is~\cite{Lew77},  $L_p\tp X$ has the Radon-Nikod\'ym property iff $X$ does~\cite{BD01,Bu02}, and $L_p\tp X$ contains a copy of $c_0$ iff $X$ does~\cite{BD02}.

When one does not consider projective tensor products with $L_p(\mu)$ spaces, the above permanence properties are known to fail~\cite{Pis83,BP83}. Specifically, Bourgain and Pisier~\cite{BP83} showed that there exist a weakly sequentially complete Banach space $X$ with the Radon-Nikod\'ym property, such that $X\tp X$ contains a copy of $c_0$ (thus $X\tp X$ fails weak sequential completeness and the Radon-Nikod\'ym property).

Here we will be concerned with the permanence of finite cotype under projective tensor products. For $q\in [2,\infty)$, a Banach space $(X,\|\cdot\|_X)$ is said to have cotype $q$ if there exists $C\in (0,\infty)$ such that for every $n\in \N$ and every $x_1,\ldots,x_n\in X$ we have
\begin{equation}\label{eq:def cotype}
\left(\sum_{i=1}^n \|x_i\|_X^q\right)^{1/q}\le C\left(\E\left[\left\|\sum_{i=1}^n\e_i x_i\right\|_X^2\right]\right)^{1/2},
\end{equation}
where the expectation in~\eqref{eq:def cotype} is taken with respect to uniformly distributed $\e=(\e_1,\ldots,\e_n)\in \{-1,1\}^n$. Thus $L_p$ has cotype $\max\{p,2\}$ for for every $p\in [1,\infty)$ (see, e.g.,~\cite{AK05}). The infimum over those $C\in (0,\infty)$ for which~\eqref{eq:def cotype} holds true is denoted $C_q(X,\|\cdot\|_X)$, or, if the norm is clear from the context, simply $C_q(X)$. Given $k\in \N$ and a norm $\n$ on $\R^k$, it will also be convenient to denote $C_\n^{(q)}=C_q(\R^k,\n)$. If $X$ has cotype $q$ for some $q\in [2,\infty)$ then we say that $X$ has finite cotype, or simply that $X$ has cotype. The Maurey-Pisier theorem~\cite{MP76} implies that $X$ fails to have finite cotype if and only if it is universal in the sense that there exists $K\in (0,\infty)$ such that {\em all} finite dimensional Banach spaces embed into $X$ with distortion at most $K$ (equivalently, $\ell_\infty^n$ embeds into $X$ with distortion at most $K$ for all $n\in \N$). We are therefore interested in the permanence under projective tensor products of the failure of the above universality property.

Tomczak-Jaegermann proved~\cite{Tom74} that $\ell_2\tp\ell_2=S_1$ has cotype $2$, and Pisier proved~\cite{Pis92,Pis-92-clapem}  that if $p,q\in [2,\infty)$ then $L_p\tp L_q$ has cotype $\max\{p,q\}$ (see~\cite{Pis92} for a more general result along these lines). Other than these facts and the easy fact that $L_1\tp X=L_1(X)$ always inherits the cotype of $X$, we do not know of other permanence results for cotype under projective tensor products. In particular, it is open whether $L_p\tp L_q$ has finite cotype when $p\in (1,2)$ and $q\in (1,2]$, and whether $\ell_2\tp S_1=\ell_2\tp\ell_2\tp\ell_2$ has finite cotype (these questions are stated in~\cite{Pis-92-clapem}).

A remarkable theorem of Pisier~\cite{Pis83} asserts that there exist two Banach spaces $X$ and $Y$ of finite cotype such that $X\tp Y$ does not have finite cotype. Specifically, by a famous theorem of Bourgain~\cite{Bou84},  $L_1/H^1$ has cotype $2$ ($H^1$ is the closed span of $\{\theta \mapsto e^{2\pi in\theta}\}_{n=0}^\infty\subseteq L_1$), and Pisier constructs~\cite{Pis83} a Banach space $Z$ of cotype $2$ such that $Z\tp (L_1/H^1)$ contains a copy of $c_0$.

We have seen that projective tensor products with $L_p(\mu)$ spaces preserve a variety of geometric properties, but that similar results often fail for projective tensor products between general Banach spaces. In this vein, for $p\in (1,\infty)$ it was unknown whether $L_p\tp X$ has finite cotype if $X$ has finite cotype. This question was explicitly asked in~\cite[p.~59]{DFS03}, and here we answer it negatively by showing  that for every $p\in (1,\infty)$ there exists a Banach space $X$ of finite cotype such that $\ell_p\tp X$ fails to have finite cotype. Thus, $L_p\tp X$ contains copies $\{\ell_\infty^n\}_{n=1}^\infty$ with distortion bounded by a constant independent of $n$; contrast this statement with the theorem of Bu and Dowling~\cite{BD02} quoted above that asserts that $L_p\tp Y$ contains a copy of $c_0$ iff $Y$ itself contains a copy of $c_0$. Note that when $p=2$ we see that even the projective tensor product with Hilbert space need not preserve finite cotype.

Our main result is the following theorem.
\begin{theorem}\label{thm:lps no cotype}
Fix $p_1,p_2,p_3\in (1,\infty)$ such that
\begin{equation}\label{eq:pqr assumption}
\frac{1}{p_1}+\frac{1}{p_2}+\frac{1}{p_3}\le 1.
\end{equation}
Then $\ell_{p_1}\tp\ell_{p_2}\tp\ell_{p_3}$ does not have finite cotype. Moreover, there exists a universal constant $c\in (0,\infty)$ such that for every $p_1,p_2,p_3\in (1,\infty)$ satisfying~\eqref{eq:pqr assumption}, every $q\in [2,\infty)$, and every integer $n> 15$ we have
\begin{equation}\label{eq:cotype costant lower three tensor}
C_q\left(\ell_{p_1}^{n}\tp\ell_{p_2}^{n}\tp\ell_{p_3}^{n}\right)\gtrsim \frac{1}{\log n}\cdot \exp\left(\frac{c}{q}\cdot\frac{(\log\log n)^2}{\log\log\log n}\right).
\end{equation}
\end{theorem}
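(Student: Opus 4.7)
The plan is to reduce the desired cotype lower bound to the existence of short $3$-query \emph{locally decodable codes} (LDCs), then instantiate a known construction. A Katz--Trevisan-type smoothing shows that a linear $3$-query LDC encoding $k$ message bits into $n$ codeword bits yields $k$ matchings $M_1,\ldots,M_k$ in the complete tripartite $3$-uniform hypergraph on three copies of $[n]$, each of size $\Theta(\delta n)$, signs $\sigma_i\colon M_i\to\{-1,+1\}$, and codewords $C(x)\in\{-1,+1\}^n$ obeying
\[
\E_{(a,b,c)\sim M_i}\!\bigl[\sigma_i(a,b,c)\,C(x)_aC(x)_bC(x)_c\bigr]\ \ge\ \eps\, x_i\qquad\forall\,x\in\{-1,+1\}^k.
\]

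From this data I define the witness tensors
\[
V_i\ \eqdef\ n^{2-1/p_1-1/p_2-1/p_3}\!\!\sum_{(a,b,c)\in M_i}\!\sigma_i(a,b,c)\,e_a\otimes e_b\otimes e_c\ \in\ \ell_{p_1}^n\tp\ell_{p_2}^n\tp\ell_{p_3}^n.
\]
The lower bound $\|V_i\|_{\tp}\gtrsim\eps\delta$ follows by testing against the rank-one trilinear form $T_x(u,v,w)=\langle u,C(x)\rangle\langle v,C(x)\rangle\langle w,C(x)\rangle$ for a codeword with $x_i=+1$: H\"older yields $\|T_x\|=\prod_j\|C(x)\|_{p_j'}=n^{3-\sum_j1/p_j}$, while the LDC guarantee forces $\langle V_i,T_x\rangle\ge\eps\delta\cdot n^{3-\sum_j1/p_j}$, and the ratio is $\eps\delta$.

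The main obstacle will be the matching upper bound on the Rademacher average,
\[
\E_\e\Bigl\|\sum_{i=1}^k\e_i V_i\Bigr\|_{\ell_{p_1}^n\tp\ell_{p_2}^n\tp\ell_{p_3}^n}\ \lesssim\ \log n.
\]
Here the hypothesis $\sum_j1/p_j\le 1$ is essential: it is precisely what makes the scaling factor $n^{2-\sum_j 1/p_j}$ compatible with H\"older's inequality on the dual unit ball of trilinear forms. I expect to prove this by dualizing, discretizing the $\ell_{p_j'}$-unit balls into combinatorial level sets indexed by coordinate magnitudes, applying Khintchine/Bernstein to exploit cancellation among the random signs $\e_i$ (LDC smoothness keeps the per-entry variance of $\sum_i\e_i V_i$ small since each triple lies in few matchings), and absorbing the union-bound cost against the H\"older weights at each level. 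Inserting both bounds into~\eqref{eq:def cotype} then yields $C_q(\ell_{p_1}^n\tp\ell_{p_2}^n\tp\ell_{p_3}^n)\gtrsim\eps\delta\,k^{1/q}/\log n$.

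To obtain the quantitative estimate~\eqref{eq:cotype costant lower three tensor} I instantiate Efremenko's celebrated $3$-LDC, which achieves block length $n=\exp\!\bigl(\exp\bigl(O\bigl(\sqrt{\log k\log\log k}\bigr)\bigr)\bigr)$ with $\eps,\delta$ bounded away from $0$. Inverting the relation gives $\log k\gtrsim(\log\log n)^2/\log\log\log n$, which, substituted into the previous display and weakened by a factor of $1/q$ in the exponent to match the cotype exponent, produces~\eqref{eq:cotype costant lower three tensor}. Since the resulting lower bound is unbounded in $n$ for every fixed $q\in[2,\infty)$, the Maurey--Pisier theorem then implies $\ell_{p_1}\tp\ell_{p_2}\tp\ell_{p_3}$ has no finite cotype.
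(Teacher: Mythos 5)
The ``main obstacle'' you defer, namely $\E_{\e}\bigl\|\sum_{i=1}^k\e_i V_i\bigr\|_{\ell_{p_1}^n\tp\ell_{p_2}^n\tp\ell_{p_3}^n}\lesssim\log n$, is not just unproved but false. Pairing $V_i$ with a single coordinate functional $e_a^*\otimes e_b^*\otimes e_c^*$ for one triple $(a,b,c)\in M_i$ (this is a trilinear form of norm $\|e_a^*\|_{p_1'}\|e_b^*\|_{p_2'}\|e_c^*\|_{p_3'}=1$ on the projective tensor product) already shows $\|V_i\|\ge n^{2-1/p_1-1/p_2-1/p_3}\ge n$ under~\eqref{eq:pqr assumption}; pairing with the diagonal-type form supported on the whole matching even gives $\|V_i\|\gtrsim \delta^{1/r} n^2$ with $1/r=\sum_j 1/p_j$. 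Since by Jensen's inequality $\E_\e\|\sum_i\e_iV_i\|\ge\max_i\|V_i\|$, the Rademacher average is at least $n$, so no discretization/Khintchine/Bernstein argument can bring it down to $\log n$; your duality lower bound $\|V_i\|\gtrsim\eps\delta$ is correct but extremely lossy, and the two halves of the plan are incompatible. Rescaling the $V_i$ cannot help, because what the cotype inequality really requires is that the Rademacher average of the $k$ signed witnesses exceed a \emph{single} $\|V_i\|$ by only $O(\log n)$; for raw matching tensors there is no mechanism enforcing this uniformly in $k$ (generically one expects growth like $\sqrt{k}$, and in the Efremenko regime $\log k\asymp(\log\log n)^2/\log\log\log n$ one has $\sqrt{k}\gg\log n$, so the bound would evaporate).

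The paper's proof differs precisely at this point and supplies the missing mechanism. Its witnesses are not matching tensors but the first-level Fourier coefficients $\hat f(\{i\})=\E[\e_i\, C'(\e)\otimes C'(\e)\otimes C'(\e)]$ of the uniformly bounded function $f(\e)=C'(\e)^{\otimes 3}$, where $C'$ comes from the BRW08/KdW04 reduction (Lemma~\ref{lem:no alg def}). Because $\sum_i\e_i\hat f(\{i\})=\Rad(f)(\e)$ and $\|f(\e)\|\le\max_{\d\in\{-1,1\}^{3n}}\|\d\otimes\d\otimes\d\|\asymp n^{1/r}$, Pisier's $K$-convexity inequality~\eqref{eq:pisier K convex bound quote} gives $(\E\|\sum_i\e_i\hat f(\{i\})\|^2)^{1/2}\lesssim(\log n)\, n^{1/r}$, \emph{uniformly in $m$}. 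The matching structure is then used only to show each $\hat f(\{i\})$ has inner product at least $\theta/8$ with $\gtrsim\phi\theta n$ disjoint basis tensors, and a symmetrization/averaging argument (the paper's last lemma) converts this into $\|\hat f(\{i\})\|\gtrsim\phi\theta^2\,\|\sum_j e_j\otimes e_j\otimes e_j\|=\phi\theta^2\, n^{1/r}$; it is in the identity $\|\sum_j e_j\otimes e_j\otimes e_j\|=n^{1/r}$ (equal to the norm of the all-ones tensor) that the hypothesis~\eqref{eq:pqr assumption} genuinely enters. The factors $n^{1/r}$ cancel, yielding $C_q\gtrsim\phi\theta^2 m^{1/q}/\log n$, after which Efremenko's code finishes as in your last step. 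Two smaller points: Efremenko's code is not $\FF_2$-linear, so you should not assume linearity (the reduction needs only Definition~\ref{def:LDC}); and the qualitative failure of cotype follows directly from the unboundedness of $C_q$ for every fixed $q$, with no need to invoke Maurey--Pisier.
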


It follows from Theorem~\ref{thm:lps no cotype} that for every $p\in (1,\infty)$, if we set $X=\ell_{2p/(p-1)}\tp \ell_{2p/(p-1)}$ then $\ell_p\tp X$ fails to have finite cotype. By the result of Pisier~\cite{Pis92} quoted above, $X$ has finite cotype. Another notable consequence of Theorem~\ref{thm:lps no cotype} is that there exists a Banach space $Y$ such that $Y\tp Y$ has finite cotype yet $Y\tp Y\tp Y$ fails to have finite cotype. We conjecture that~\eqref{eq:cotype costant lower three tensor} is not sharp, leaving open the determination of the asymptotic behavior of, say, $C_q\left(\ell_{3}^{n}\tp\ell_{3}^{n}\tp\ell_{3}^{n}\right)$.

Our proof of Theorem~\ref{thm:lps no cotype} is based on a connection between cotype of tensor products and results from theoretical computer science, namely the theory of locally decodable codes. This link allows us to use (as a ``black box") delicate constructions that are available in the computer science literature in order to prove Theorem~\ref{thm:lps no cotype}. Our initial hope was to use this connection in the reverse direction, namely, to use Banach space theory to address an important question about the length of locally decodable codes, but it turned out that instead locally decodable codes can be used to address the question in Banach space theory described above. Nevertheless, there is hope that the connection presented below, when combined with geometric insights about tensor norms, might lead to improved lower bounds for locally decodable codes. This hope will be made explicit in the following section.

\subsection{Locally decodable codes and cotype}\label{sec:LDC}
Definition~\ref{def:LDC} below is due to Katz and Trevisan~\cite{KT00}; see the surveys~\cite{Tre04, Yek11} for more information on this notion (and the closely related notion of {\em private information retrieval}), as well as a description of some of its many applications in cryptography and computational complexity theory. We note that in the present paper no reference to Definition~\ref{def:LDC} will be made other than through the conclusion of Lemma~\ref{lem:no alg def} below.

\begin{definition}[$3$-query locally decodable code]\label{def:LDC}
Fix $m,n\in \N$ and $\phi,\theta\in (0,1/2)$. A function $$C:\{-1,1\}^m\to \{-1,1\}^{n}$$ is called a  $3$-query locally decodable code of quality $(\phi,\theta)$ if for every $t\in \{1,\ldots,m\}$ there exists a distribution $\A_t$ over $4$-tuples $(i,j,k,g)$, where $i,j,k \in \{1,\ldots,n\}$ and $g:\{-1,1\}^3 \to \{-1,1\}$, with the property that for every $\e\in \{-1,1\}^m$ and every $\d\in \{-1,1\}^n$ that differs from $C(\e)$ in at most $\phi\cdot n$ coordinates, with probability (with respect to the distribution $\A_t$) at least $\frac12+\theta$  we have $g(\d_i,\d_j,\d_k)=\e_t$.
\end{definition}

The motivation behind Definition~\ref{def:LDC} is as follows. Just like standard error correcting codes, locally decodable codes provide a way to encode an $m$-bit message into a longer $n$-bit codeword in a way that allows one to recover the original message from the codeword, even if it is corrupted in any  set of coordinates that isn't too large. However, while standard error correcting codes typically require reading essentially all the $n$ bits of the corrupted codeword in order to recover even one bit of the message, a $3$-query locally decodable code allows one to do this while reading only $3$ bits.

It is an important open question  to determine the asymptotic behavior in $m$ of the smallest $n$ for which $3$-query locally decodable codes exist (for some fixed $\phi,\theta$, say, $\phi=\theta=1/16$).
The best known upper bound, due to Efremenko~\cite{Efr09} using in part key ideas of Yekhanin~\cite{Yek08} and a combinatorial construction of Grolmusz~\cite{Gro00}, is that for every $m\in \N$ there exists an integer $n\in \N$ satisfying
\begin{equation}\label{eq:efremenko}
\log\log n\asymp \sqrt{\log m\log\log m},
\end{equation}
for which there exists  a code $C:\{-1,1\}^m\to \{-1,1\}^n$ which is $3$-query locally decodable  of quality $\left(\phi,\frac12-6\phi\right)$ for all $\phi\in (0,1/12)$. See~\cite{BeimelIKO12} for an improvement of the implicit constant factor in~\eqref{eq:efremenko}.

The best known lower bound, due to Woodruff~\cite{Woo07} as a logarithmic improvement over a lower bound of Kerenidis and de Wolf~\cite{KdW04}, is that for, say, $\phi=\theta=1/16$ we necessarily have
\begin{equation}\label{eq:woodruff}
n\gtrsim \frac{m^2}{\log m}.
\end{equation}

In what follows, given $n\in \N$ we let $e_1,\ldots,e_n$ be the standard coordinate basis of $\R^n$.  Let $\|\cdot\|$ be a norm on $\R^{n}\otimes \R^{n}\otimes \R^{n}$. For $K\in [1,\infty)$ say that $\|\cdot\|$ is {\em $K$-tensor-symmetric} if for every choice of permutations $\pi,\sigma,\tau\in S_{n}$, every choice of sign vectors $\e,\d,\eta\in \{-1,1\}^{n}$, and every choice of scalars  $\{a_{i,j,k}\}_{i=1}^{n}\subseteq \R$, we have
\begin{multline}\label{eq: def tensor symmetric}
\left\|\sum_{i=1}^{n}\sum_{j=1}^{n}\sum_{k=1}^{n}\e_i\d_j\eta_k a_{\pi(i),\sigma(j),\tau(k)} e_i\otimes e_j\otimes e_k\right\|\\\le K \left\|\sum_{i=1}^{n}\sum_{j=1}^{n}\sum_{k=1}^{n}a_{i,j,k} e_i\otimes e_j\otimes e_k\right\|.
\end{multline}

\begin{theorem}\label{thm:tensor symmetric}
Fix $m,n\in \N$ and $\phi,\theta\in (0,1/2)$. Suppose that there exists a $3$-query locally decodable code $C:\{-1,1\}^m\to \{-1,1\}^n$ of quality $(\phi,\theta)$. For every $K\in [1,\infty)$, if $\n$ is a $K$-tensor-symmetric norm on $\R^{3n}\otimes \R^{3n}\otimes \R^{3n}$ then for every $q\in [2,\infty)$ we have
$$
K^2C_\n^{(q)}\cdot\frac{\left\|\sum_{i=1}^{3n}\sum_{j=1}^{3n}\sum_{k=1}^{3n} e_i\otimes e_j\otimes e_k\right\|}{\left\|\sum_{i=1}^{3n}e_i\otimes e_i\otimes e_i\right\|}\gtrsim \frac{\phi\theta^2 m^{1/q}}{\log(n+1)}.
$$
\end{theorem}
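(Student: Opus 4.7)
The plan is to apply the cotype inequality of $\n$ to the family $x_1,\dots,x_m \in \R^{3n}\otimes\R^{3n}\otimes\R^{3n}$ defined by
\[
x_t \;=\; \mathbb{E}_{\epsilon\in\{-1,1\}^m}\bigl[\epsilon_t\, C(\epsilon)\otimes C(\epsilon)\otimes C(\epsilon)\bigr],
\]
i.e., the level-$\{t\}$ Fourier coefficient of the tensor-valued Boolean function $\epsilon\mapsto C(\epsilon)^{\otimes 3}$.  Writing $T=\sum_{i,j,k=1}^{3n}e_i\otimes e_j\otimes e_k$ and $I=\sum_{a=1}^{3n}e_a\otimes e_a\otimes e_a$, the goal is to establish
\[
\Bigl(\sum_{t=1}^m\|x_t\|^q\Bigr)^{1/q}\gtrsim \frac{\phi\theta^2}{K}\,m^{1/q}\,\|I\|\qquad\text{and}\qquad\Bigl(\mathbb{E}_\eta\bigl\|\textstyle\sum_t\eta_t x_t\bigr\|^2\Bigr)^{1/2}\lesssim K\log(n+1)\,\|T\|,
\]
from which the stated inequality follows by the definition of $C_\n^{(q)}$.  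A standard Fourier-analytic reduction on $\{-1,1\}^3$ first replaces each three-bit decoder predicate by a signed trilinear parity $(\delta_i,\delta_j,\delta_k)\mapsto\sigma\delta_i\delta_j\delta_k$, losing a constant factor in $\theta$ and enlarging the codeword alphabet by a factor of at most three (this accounts for $3n$ in place of $n$ in the statement).  After this reduction and a further smoothing step that exploits the robustness parameter $\phi$, each input bit $t$ corresponds to a distribution $\A_t$ over signed triples $((i,j,k),\sigma)$ with $O(1/n)$-smooth one-coordinate marginals, and the associated decoder tensor $\mu_t=\mathbb{E}_{\A_t}[\sigma\,e_i\otimes e_j\otimes e_k]$ satisfies $\|\mu_t\|_{\ell_1}\le 1$ together with $\langle\mu_t,C(\epsilon)^{\otimes 3}\rangle\ge 2\theta\epsilon_t$ for every $\epsilon\in\{-1,1\}^m$.

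The Rademacher-average upper bound is handled by Pisier's inequality on the level-$1$ Fourier projection, which asserts that for any Banach space $V$ and any $F:\{-1,1\}^m\to V$ one has $\bigl(\mathbb{E}_\eta\|\sum_t\eta_t\widehat F(\{t\})\|^2\bigr)^{1/2}\lesssim\log(m+1)\bigl(\mathbb{E}_\epsilon\|F(\epsilon)\|^2\bigr)^{1/2}$.  Applying this with $V=\R^{3n}\otimes\R^{3n}\otimes\R^{3n}$ equipped with $\n$ and $F(\epsilon)=C(\epsilon)^{\otimes 3}$, and observing that $C(\epsilon)^{\otimes 3}$ is obtained from $T$ by the rank-$1$ sign pattern $C(\epsilon)\otimes C(\epsilon)\otimes C(\epsilon)$---so that $\|C(\epsilon)^{\otimes 3}\|\le K\|T\|$ by $K$-tensor-symmetry uniformly in $\epsilon$---yields the required estimate, using $m\le n$ to replace $\log(m+1)$ by $\log(n+1)$.

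The main obstacle is the lower bound on $\|x_t\|$.  Duality gives $\|x_t\|\ge\langle x_t,\mu_t\rangle/\|\mu_t\|_{\n^*}$, and Fourier inversion computes $\langle x_t,\mu_t\rangle=\mathbb{E}_\epsilon\bigl[\epsilon_t\langle\mu_t,C(\epsilon)^{\otimes 3}\rangle\bigr]\ge 2\theta$; everything therefore reduces to the key estimate $\|\mu_t\|_{\n^*}\lesssim K/(\phi\theta\|I\|)$.  This must be derived purely from the smoothness and $\ell_1$-normalization of $\mu_t$ together with the $K$-tensor-symmetry of $\n$ (rank-$1$ sign flips and independent permutations of the three factors).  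The intended route is to use the permutation component of the tensor-symmetry---after randomised reindexing of each of the three copies of $[3n]$---to dominate the action of $\mu_t$ on any $\n$-unit test tensor by that of a rescaled diagonal, and to invoke the LDC robustness $\phi$ to ensure that this reindexing preserves the decoding correlation; the factor $1/\|I\|$ then emerges through the duality bound $\|I\|\cdot\|I\|_{\n^*}\ge\langle I,I\rangle=3n$.  Executing this step with the correct powers of $K$, $\phi$, and $\theta$ is the crux of the proof; combining it with the Rademacher-average bound and rearranging closes the argument.
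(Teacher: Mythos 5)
Your skeleton matches the paper's: you take $x_t=\widehat{F}(\{t\})$ for $F(\epsilon)=C(\epsilon)^{\otimes 3}$, bound the Rademacher average from above by $K\log(n+1)\|T\|$ via Pisier's $K$-convexity theorem and the tensor-symmetric bound $\|C(\epsilon)^{\otimes3}\|\le K\|T\|$, and feed this into the cotype-$q$ inequality. (One correction there: Pisier's inequality is governed by $\log(\dim V+1)$, not by $\log(m+1)$ in the number of Rademacher variables --- the latter form is false in general, e.g.\ for $L_1$ the relevant constant grows like $\sqrt{m}$; your application still works because $\dim V=(3n)^3$ gives $\log((3n)^3+1)\asymp\log(n+1)$.) But the crux, the lower bound $\|x_t\|\gtrsim \phi\theta^2\|I\|/K$, is not proved: you explicitly leave ``executing this step with the correct powers of $K,\phi,\theta$'' open, and the route you sketch has a real defect. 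You want an \emph{upper} bound $\|\mu_t\|_{\n^*}\lesssim K/(\phi\theta\|I\|)$ for the smooth decoder tensor $\mu_t$, but the duality you invoke, $\|I\|\cdot\|I\|_{\n^*}\ge\langle I,I\rangle=3n$, only gives a \emph{lower} bound on $\|I\|_{\n^*}$, so the factor $1/\|I\|$ cannot ``emerge'' from it; and $\ell_1$-normalization plus smooth marginals alone, without further combinatorial structure on the support of $\mu_t$, does not obviously control $\|\mu_t\|_{\n^*}$ against an arbitrary $K$-tensor-symmetric norm.

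The paper closes exactly this gap differently, and without duality. First, a purification step (Lemma~\ref{lem:no alg def}, quoting~\cite{BRW08}, in the spirit of Katz--Trevisan/Kerenidis--de~Wolf) replaces the decoder distributions by, for each bit $i$, a \emph{matching} of at least $\phi\theta n/9$ pairwise disjoint triples $\{\pi_i(j),\sigma_i(j),\tau_i(j)\}$, each predicting $\epsilon_i$ with advantage $\theta/16$ via a signed parity; this is where $\phi$ enters, not through any reindexing-preservation argument. Consequently $\widehat{f}(\{i\})$ has $\gtrsim\phi\theta n$ coefficients of absolute value $\ge\theta/8$ sitting on a partial permutation-matching. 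Second, a symmetrization (the Lemma giving~\eqref{eq:S upper K}): applying random sign flips in the three slots and a common random permutation --- all operations costing at most a factor $K$ by~\eqref{eq: def tensor symmetric} --- and averaging, one produces from such a tensor $x$ a multiple $\ge\alpha\beta$ of the diagonal $I$ inside the convex hull of tensors of norm $\le K\|x\|$, whence $\|x\|\ge\frac{\alpha\beta}{K}\|I\|$ directly. Your proposal is missing both the matching extraction and a working substitute for this symmetrization, so as it stands the key lower bound is unsupported.
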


Theorem~\ref{thm:tensor symmetric} implies that if, say, $\phi=\theta=1/16$ then there exists a universal constant $c\in (0,\infty)$ such that
\begin{equation}\label{eq:sup lower}
n\ge \sup_\n \sup_{q\in [2,\infty)} \exp\left(\frac{cm^{1/q}\left\|\sum_{i=1}^{3n}e_i\otimes e_i\otimes e_i\right\|}{C_\n^{(q)} \left\|\sum_{i=1}^{3n}\sum_{j=1}^{3n}\sum_{k=1}^{3n} e_i\otimes e_j\otimes e_k\right\|}\right),
\end{equation}
where the first supremum in~\eqref{eq:sup lower} is taken over all the $1$-tensor-symmetric norms $\|\cdot\|$ on  $\R^{3n}\otimes \R^{3n}\otimes \R^{3n}$. While our initial hope was to use~\eqref{eq:sup lower} to narrow the large gap between~\eqref{eq:efremenko} and~\eqref{eq:woodruff}, we do not know if there exists a norm on $\R^{3n}\otimes \R^{3n}\otimes \R^{3n}$ with respect to which~\eqref{eq:sup lower} exhibits an asymptotic improvement over~\eqref{eq:woodruff}. This question is arguably the most important question that the present paper leaves open. However, Theorem~\ref{thm:tensor symmetric} contains new information when one contrasts it with Efremenko's upper bound~\eqref{eq:efremenko}, thus yielding the following corollary.

\begin{corollary}\label{coro:cotype const lower}
There exists a universal constant $c\in (0,\infty)$  such that for every integer $n>15$, every $q\in [2,\infty)$ and every $K\in [1,\infty)$, any $K$-tensor-symmetric norm $\|\cdot\|$ on $\R^{n}\otimes \R^{n}\otimes \R^{n}$ satisfies
\begin{equation*}
C_\n^{(q)}\gtrsim \frac{\left\|\sum_{i=1}^{n}e_i\otimes e_i\otimes e_i\right\|}{\left\|\sum_{i=1}^{n}\sum_{j=1}^{n}\sum_{k=1}^{n} e_i\otimes e_j\otimes e_k\right\|}
\cdot\frac{\exp\left(\frac{c}{q}\cdot\frac{(\log\log n)^2}{\log\log\log n}\right)}{K^2\log n}.
\end{equation*}
\end{corollary}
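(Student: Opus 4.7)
The plan is to combine Theorem~\ref{thm:tensor symmetric} with Efremenko's quasi-polylogarithmic $3$-query locally decodable code recalled in~\eqref{eq:efremenko}, choosing the codeword length to be roughly $n/3$ so that the resulting tensor-norm statement lives on $\R^n\otimes\R^n\otimes\R^n$.

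Given $n>15$, set $N=\lfloor n/3\rfloor\ge 5$. The Efremenko--Yekhanin--Grolmusz construction, used with $\phi=1/16$ (giving quality $(1/16,1/8)$), provides an integer $m$ with
\[
\log m \;\ge\; c_0\cdot\frac{(\log\log n)^2}{\log\log\log n}
\]
for a universal $c_0>0$, together with a $3$-query locally decodable code $C:\{-1,1\}^m\to\{-1,1\}^N$ of that quality: this is just the asymptotic~\eqref{eq:efremenko} solved for $m$ in terms of an upper bound on $N$. Let $\|\cdot\|$ be any $K$-tensor-symmetric norm on $\R^n\otimes\R^n\otimes\R^n$. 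Its restriction $\|\cdot\|'$ to the coordinate subspace indexed by $\{1,\ldots,3N\}^3$ inherits the $K$-tensor-symmetric property (use only permutations that fix $\{3N+1,\ldots,n\}$ pointwise and signs supported on $\{1,\ldots,3N\}$), and $C_{\|\cdot\|'}^{(q)}\le C_{\|\cdot\|}^{(q)}$ because cotype passes to subspaces.

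Applying Theorem~\ref{thm:tensor symmetric} to the pair $(C,\|\cdot\|')$, using $\phi\theta^2\asymp 1$, $\log(N+1)\lesssim\log n$, and $m^{1/q}\ge\exp\!\left(\tfrac{c_0}{q}\cdot\tfrac{(\log\log n)^2}{\log\log\log n}\right)$, yields
\[
C_{\|\cdot\|}^{(q)}\;\gtrsim\;\frac{\left\|\sum_{i=1}^{3N}e_i\otimes e_i\otimes e_i\right\|}{\left\|\sum_{i,j,k=1}^{3N}e_i\otimes e_j\otimes e_k\right\|}\cdot\frac{\exp\!\left(\tfrac{c_0}{q}\cdot\tfrac{(\log\log n)^2}{\log\log\log n}\right)}{K^2\log n}.
\]
To match the statement of Corollary~\ref{coro:cotype const lower}, the two truncated tensors must be compared to their full versions on $\R^n$. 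For the numerator, averaging $\sum_{i=1}^{3N}e_i\otimes e_i\otimes e_i$ over the cyclic shifts of $\{1,\ldots,n\}$ produces $\tfrac{3N}{n}\sum_{i=1}^n e_i\otimes e_i\otimes e_i$; since each cyclic shift is a permutation, \eqref{eq: def tensor symmetric} delivers $\|\sum_{i=1}^{3N}e_i\otimes e_i\otimes e_i\|\gtrsim K^{-1}\|\sum_{i=1}^n e_i\otimes e_i\otimes e_i\|$. For the denominator, taking independent random $\varepsilon,\delta,\eta\in\{-1,1\}^n$ that are $\equiv +1$ on $\{1,\ldots,3N\}$ and uniform on $\{3N+1,\ldots,n\}$, the expectation of $\sum_{i,j,k=1}^n\varepsilon_i\delta_j\eta_k\,e_i\otimes e_j\otimes e_k$ equals $\sum_{i,j,k=1}^{3N}e_i\otimes e_j\otimes e_k$, so Jensen's inequality combined with \eqref{eq: def tensor symmetric} gives $\|\sum_{i,j,k=1}^{3N}e_i\otimes e_j\otimes e_k\|\le K\|\sum_{i,j,k=1}^n e_i\otimes e_j\otimes e_k\|$. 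Substituting these two comparisons into the previous estimate recovers the form of Corollary~\ref{coro:cotype const lower}, the additional powers of $K$ from the truncation step being absorbed into the $K^2$ denominator (equivalently, into the implicit constant).

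There is no substantive obstacle: the heavy lifting is Theorem~\ref{thm:tensor symmetric} together with Efremenko's combinatorial construction~\eqref{eq:efremenko}, both invoked as black boxes. The only mild care needed lies in the truncation-and-lifting step above, where tensor symmetry is used to move freely between the subcube of dimension $3N$ (on which the LDC acts) and the full dimension $n$ (in which the corollary is phrased).
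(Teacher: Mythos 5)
Your high-level plan coincides with the paper's (Efremenko's code \eqref{eq:efremenko} fed into the tensor-symmetric bound), and your truncation/lifting comparisons via cyclic shifts and random signs are correct. But there are two genuine gaps. First, the claim that for every $n>15$ the construction behind \eqref{eq:efremenko} ``provides'' a $3$-query LDC of length exactly $N=\lfloor n/3\rfloor$ with $\log m\gtrsim (\log\log n)^2/\log\log\log n$ is not ``just the asymptotic solved for $m$.'' Statement \eqref{eq:efremenko} only guarantees codes at a special sequence of lengths $\{n_m\}$; for an arbitrary target length $N$ you must take the largest $m$ with $n_m\le N$ and then bridge the mismatch, e.g.\ by padding the codeword to length $N$, which degrades the corruption parameter $\phi$ by the factor $n_m/N$ and therefore requires knowing that consecutive achievable lengths are multiplicatively close. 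This dimension-matching is exactly the nontrivial part of the paper's argument: in the proof of Corollary~\ref{cor:n_m} the authors choose $m$ with $3(1-\beta_1/7)n_m\le n\le 3n_m$ (using the density of the sequence, \eqref{eq:the sequence is dense}) and, instead of altering the code, extend the given norm trivially to a \emph{seminorm} on $\R^{3n_m}\otimes\R^{3n_m}\otimes\R^{3n_m}$ --- which is why Theorem~\ref{thm:main LDC cotype} and Corollary~\ref{cor:n_m} are stated for seminorms --- absorbing the mismatch into the slack $\S_{|\cdot|}(\alpha_1,\beta_1)\ge\S_{\|\cdot\|}(\alpha_1,\beta_1/2)$, and handling small $n$ by adjusting $c$. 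Your proposal silently assumes this whole step away.

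Second, your dependence on $K$ is wrong. Theorem~\ref{thm:tensor symmetric} applied to the restricted norm already carries the factor $K^{2}$, and your two truncation comparisons each cost another factor of $K$ (the diagonal lower bound gives $K^{-1}$, the all-ones upper bound gives $K$), so your chain yields the conclusion with $K^{4}$ in the denominator. Since $K\in[1,\infty)$ is an arbitrary parameter of the statement, the extra $K^{2}$ cannot be ``absorbed into the implicit constant''; you have proved a strictly weaker inequality than the one claimed in Corollary~\ref{coro:cotype const lower} (it still suffices for Theorem~\ref{thm:lps no cotype}, where $K=1$, but not for the corollary as stated). The paper avoids this because the $K$-dependence enters only once, through Lemma~3.4 applied to the original norm ($\O_\n\le K\|\sum_{i,j,k}e_i\otimes e_j\otimes e_k\|$ and $\S_\n(\alpha,\beta)\ge \frac{\alpha\beta}{K}\|\sum_i e_i\otimes e_i\otimes e_i\|$), after the seminorm-level bound of Corollary~\ref{cor:n_m}, which involves no tensor symmetry at all. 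To repair your route you would either need to redo the dimension matching at the seminorm level, as the paper does, or accept the weaker $K^{4}$ bound.
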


Suppose that $p_1,p_2,p_3\in (1,\infty)$ satisfy~\eqref{eq:pqr assumption} and define $r\in [1,\infty)$ by
\begin{equation*}\label{eq:def s exponent}
\frac{1}{r}\eqdef \frac{1}{p_1}+\frac{1}{p_2}+\frac{1}{p_3}.
\end{equation*}
Denoting $e=\sum_{i=1}^ne_i\in \R^n$, we have
\begin{multline}\label{eq:three e}
\left\|\sum_{i=1}^{n}\sum_{j=1}^{n}\sum_{k=1}^{n}e_i\otimes e_j\otimes e_k\right\|_{\ell_{p_1}^{n}\tp\ell_{p_2}^{n}\tp\ell_{p_3}^{n}}=\left\|e\otimes e\otimes e\right\|_{\ell_{p_1}^{n}\tp\ell_{p_2}^{n}\tp\ell_{p_3}^{n}}\\=\|e\|_{\ell_{p_1}^n}\cdot\|e\|_{\ell_{p_2}^n}\cdot\|e\|_{\ell_{p_3}^n}=n^{1/p_1}\cdot n^{1/p_2}\cdot n^{1/p_3}=n^{1/r}.
\end{multline}
It is also well known (see, e.g., Theorem 1.3 in~\cite{AF96}) that
\begin{equation}\label{eq:our ratio is 1}
\left\|\sum_{i=1}^{n}e_i\otimes e_i\otimes e_i\right\|_{\ell_{p_1}^{n}\tp\ell_{p_2}^{n}\tp\ell_{p_3}^{n}}= n^{1/r}.
\end{equation}
Since $\ell_{p_1}^{n}\tp\ell_{p_2}^{n}\tp\ell_{p_3}^{n}$ is $1$-tensor-symmetric,  it follows from~\eqref{eq:three e} and~\eqref{eq:our ratio is 1} that Theorem~\ref{thm:lps no cotype} is a consequence of Corollary~\ref{coro:cotype const lower}.

\section{Preliminaries}\label{sec:prem}
In this section we briefly recall some standard notation and results on vector-valued Fourier analysis.

For $n\in \N$, the Walsh functions $\{W_A:\{-1,1\}^n\to \{-1,1\}\}_{A\subseteq \{1,\ldots,n\}}$ are given by $W_A(\e)=\prod_{i\in A}\e_i$. If $(X,\|\cdot\|_X)$ is a Banach space then for every $f:\{-1,1\}^n\to X$ and $A\subseteq \{1,\ldots,n\}$ we write
\begin{equation}\label{eq:fourier coeff def}
\widehat{f}(A)=\E\left[W_A(\e)f(\e)\right],
\end{equation}
where the expectation in~\eqref{eq:fourier coeff def} is with respect to $\e\in \{-1,1\}^n$ chosen uniformly at random. Then,
$$
\forall\, \e\in \{-1,1\}^n,\quad f(\e)=\sum_{A\subseteq\{1,\ldots,n\}} W_A(\e) \widehat{f}(A).
$$

The {\em Rademacher projection} of $f$, denoted $\Rad(f):\{-1,1\}^n\to X$, is defined by
$$
\forall\, \e\in \{-1,1\}^n,\quad\Rad(f)(\e)=\sum_{i=1}^n \e_i\hat{f}(\{i\}).
$$
Pisier's  famous bound on the $K$-convexity constant~\cite{Pis80} asserts that if $X$ is finite dimensional then every $f:\{-1,1\}^n\to X$ satisfies
\begin{equation}\label{eq:pisier K convex bound quote}
\sqrt{\E\left[\left\|\Rad(f)(\e)\right\|_X^2\right]}\lesssim \log \left(\dim(X)+1\right) \cdot \sqrt{\E\left[\left\|f(\e)\right\|_X^2\right]}.
\end{equation}
Recall that the implied constant in~\eqref{eq:pisier K convex bound quote} is universal, and thus it does not depend on $n$, $f$, $(X,\|\cdot\|_X)$ or $\dim(X)$. Bourgain proved~\cite{Bou84-Kconvex} that the logarithmic dependence on $\dim(X)$ in~\eqref{eq:pisier K convex bound quote} cannot be improved in general.

\section{Relating locally decodable codes to cotype}\label{sec:LDC-cotype}

Locally decodable codes will be used in what follows via the following lemma which is a slight variant of a result that appears in Appendix B of~\cite{BRW08} (the proof in~\cite{BRW08} is itself  a variant of an argument in~\cite{KdW04}).
\begin{lemma}[\cite{BRW08}]\label{lem:no alg def}
Fix $m,n\in \N$ and $\phi,\theta\in (0,1/2)$. Suppose that $C:\{-1,1\}^m\to \{-1,1\}^n$ is a $3$-query locally decodable code  of quality $(\phi,\theta)$. Then there exists a function $C':\{-1,1\}^m\to \{-1,1\}^{3n}$ with the following properties. For every $i\in \{1,\ldots,m\}$ there exist three  permutations $\pi_i,\sigma_i,\tau_i\in S_{3n}$,
and for every  $j\in \{1,\ldots,\lceil \phi\theta n/9\rceil\}$ there exists a sign
$
\d_i^j\in \{-1,1\},
$
such that if $\e\in \{-1,1\}^m$ is chosen uniformly at random then with probability at least $\frac12 +\frac{\theta}{16}$ we have
\begin{equation}\label{eq:LDC def}
 C'(\e)_{\pi_i\left(j\right)}C'(\e)_{\sigma_i\left(j\right)}C'(\e)_{\tau_i\left(j\right)}=\d_i^j\e_i.
\end{equation}
\end{lemma}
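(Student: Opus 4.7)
The plan is to follow the Kerenidis--de Wolf / Katz--Trevisan reduction of arbitrary $3$-query local decoders to structured parity decoders, extracting many vertex-disjoint triples that each predict every message bit. First I would apply the Katz--Trevisan smoothing lemma to $C$: at the cost of absorbing small constants into $\phi$ and $\theta$, we may assume that for every $t\in\{1,\ldots,m\}$ the decoder distribution $\A_t$ is smooth, meaning that each coordinate $\ell\in\{1,\ldots,n\}$ is touched by a random triple drawn from $\A_t$ with probability at most $3/(\phi n)$, while still satisfying $\Pr_\e[g(C(\e)_i,C(\e)_j,C(\e)_k)=\e_t]\ge \frac{1}{2}+\theta$ when $\e$ is uniform. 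Rewriting the success condition as $\E[\e_t\cdot g(C_i,C_j,C_k)]\ge 2\theta$ and applying reverse Markov produces, inside the support of $\A_t$, triples carrying total $\A_t$-mass at least $\theta$, each of which individually decodes $\e_t$ with correlation at least $\theta$.

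Next, using smoothness, I would greedily extract a matching of these ``good'' triples: removing any chosen triple together with every other triple meeting it kills at most $9/(\phi n)$ of probability mass, so at least $\lceil \phi\theta n/9\rceil$ vertex-disjoint good triples can be pulled out of $\A_t$. For each such triple on coordinates $(a,b,c)$ with decoder $g:\{-1,1\}^3\to\{-1,1\}$, I would Fourier-expand $g=\sum_{S\subseteq\{1,2,3\}}\hat g(S)\chi_S$ and note that the $S=\emptyset$ term contributes $0$ to $\E_\e[\e_t g(C_a,C_b,C_c)]$; pigeonholing over the remaining seven Fourier monomials then supplies a nonempty $S\subseteq\{1,2,3\}$ and a sign $\delta\in\{-1,1\}$ with $\E_\e[\e_t\cdot\delta \prod_{s\in S}C(\e)_{q_s}]\ge \theta/8$, where $q_s\in\{a,b,c\}$ indexes the coordinates of the triple. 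Equivalently, this parity decoder of arity $|S|\in\{1,2,3\}$ predicts $\e_t$ with probability at least $\tfrac{1}{2}+\tfrac{\theta}{16}$.

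Finally, to promote every parity to a degree-three product of the form appearing in~\eqref{eq:LDC def}, I would augment the codeword to $C'(\e)=(C(\e),\,-C(\e),\,\1_n)\in\{-1,1\}^{3n}$. Using the first block of $n$ coordinates installs a factor $+C(\e)_s$ and the second block installs $-C(\e)_s$, so the sign $\delta$ can always be realized; the third block supplies factors of $1$ with which to pad parities of arity $1$ or $2$ up to arity $3$. Since the underlying good triples are vertex-disjoint in $\{1,\ldots,n\}$, distinct padding slots in the third block may be reserved for distinct triples, so the resulting $3$-tuples of positions in $\{1,\ldots,3n\}$ remain pairwise disjoint. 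Labelling the three positions of the $j$-th good triple for message bit $i$ as $\pi_i(j),\sigma_i(j),\tau_i(j)$ then yields three injective partial maps on $\{1,\ldots,\lceil \phi\theta n/9\rceil\}$, which I extend arbitrarily to full permutations of $\{1,\ldots,3n\}$; the sign $\delta_i^j$ is the one extracted above, and~\eqref{eq:LDC def} follows. The main obstacle is precisely this final packaging step: one must ensure that parity decoders of three possibly different arities, together with their signs, can be simultaneously realized as triples of coordinates of $C'$ that are distinct within each triple and disjoint across $j$, so that three honest elements of $S_{3n}$ (and not merely three functions) can be produced; dedicating a separate coordinate of the third block to each triple is what makes this succeed.
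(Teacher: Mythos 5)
Your argument is correct, and its endgame (padding the code to length $3n$ with dummy coordinates fixed to $1$, turning the disjoint parity sets into triples of equal size $3$, and reading off $\pi_i,\sigma_i,\tau_i$ as extensions of the resulting injective partial maps) is exactly what the paper does. The difference is upstream: the paper simply invokes Appendix B of [BRW08] as a black box for the key combinatorial statement --- for each $i$ a family $\F_i$ of at least $\phi\theta n/9$ pairwise disjoint sets $S\subseteq\{1,\ldots,n\}$, $|S|\le 3$, each admitting a sign $\d_i(S)$ with $\Prob[\prod_{s\in S}C(\e)_s=\d_i(S)\e_i]\ge\frac12+\frac{\theta}{16}$ --- whereas you rederive that statement from Definition~\ref{def:LDC} via the standard Katz--Trevisan smoothing (query probability at most $3/(\phi n)$ per coordinate), reverse Markov to isolate $\theta$ mass of triples with correlation at least $\theta$, greedy extraction of at least $\phi\theta n/9$ vertex-disjoint good triples, and Fourier expansion of $g$ with a pigeonhole over the seven nonempty characters (which in fact gives correlation $\theta/7\ge\theta/8$); this is essentially the content of the cited appendix, so your proof buys self-containedness at no loss in parameters. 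Two small remarks: the $-C(\e)$ block in your $C'=(C,-C,\1_n)$ is redundant, since the sign $\d_i^j$ is already allowed in the conclusion (the paper sets all $2n$ extra coordinates to $1$, which also makes the padding-budget count trivial; with only $n$ ones you need $2\lceil\phi\theta n/9\rceil\le n$, which does hold since $\phi\theta<1/4$, and in any case the lemma does not force the three positions within a triple to be distinct); and one should note explicitly that a decoder may query repeated positions, so the effective parity support can have size $1$ or $2$ --- your treatment of arities $1,2,3$ with padding absorbs this, matching the paper's ``$|S|\le 3$''.
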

\begin{proof}
By Appendix B of~\cite{BRW08}, for every $i\in \{1,\ldots,m\}$ there exists a family of nonempty disjoint subsets $\F_i$ of $\{1,\ldots, n\}$ such that
\begin{itemize}
\item $|\F_i|\ge\frac{ \phi\theta n}{9}$,
\item each $S\in \F_i$ satisfies $|S|\le 3$,
\item for each $S\in \F_i$ there exists a sign $\d_i(S)\in \{-1,1\}$ such that if $\e\in \{-1,1\}^m$ is chosen uniformly at random then
\begin{equation}\label{eq:expectation product}
\Prob\left[\prod_{s\in S} C(\e)_s=\d_i(S)\e_i\right]\ge \frac12 +\frac{\theta}{16}.
\end{equation}
\end{itemize}

 Define $C':\{-1,1\}^m\to \{-1,1\}^{3n}$ by setting the first $n$ coordinates of $C'(x)$ to be equal to $C(x)$, and defining the remaining $2n$ coordinates of $C'(x)$ to be equal to $1$. One can then add $3-|S|$ elements from  $\{n+1,\ldots,3n\}$ to each set $S\in \F_i$ so that that $\F_i$ becomes a family of disjoint subsets of $\{1,\ldots, 3n\}$ of size equal to $3$, while not changing the validity of~\eqref{eq:expectation product} with $C$ replaced by $C'$. Now, there are  $\pi_i, \sigma_i,\tau_i\in S_{3n}$ such that $\F_i= \{\{\pi_i(j),\sigma_i(j),\tau_i(j)\}\}_{j=1}^{|\F_i|}$. Writing  $\d_i^j=\d_i(\{\pi_i(j),\sigma_i(j),\tau_i(j)\})$, the validity of~\eqref{eq:LDC def} with probability at least $\frac12 +\frac{\theta}{16}$ is the same as~\eqref{eq:expectation product}.
\end{proof}

Fix $n\in \N$ and let $\|\cdot\|$ be a seminorm on $\left(\R^n\right)^{\otimes 3}\eqdef \R^{n}\otimes \R^{n}\otimes \R^{n}$. Write
\begin{equation}\label{eq:defO}
\O_\n\eqdef \max_{\e\in \{-1,1\}^{n}}\|\e\otimes \e\otimes \e\|.
\end{equation}
For $\alpha,\beta\in (0,1)$  consider the subset $S(\alpha,\beta)\subseteq \left(\R^n\right)^{\otimes 3}$ defined by
\begin{multline}\label{eq:B set}
S(\alpha,\beta)\\\eqdef \bigcup_{\pi,\sigma,\tau\in S_n}\bigcap_{j=1}^{\left\lceil \beta n\right\rceil} \left\{x\in \left(\R^n\right)^{\otimes 3}:\ \left|\left\langle e_{\pi\left(j\right)}\otimes e_{\sigma\left(j\right)}\otimes e_{\tau\left(j\right)},x\right\rangle\right|\ge \alpha\right\},
\end{multline}
and
write
\begin{equation}\label{eq:defS}
\S_\n(\alpha,\beta)\eqdef \min_{x\in S(\alpha,\beta)}\|x\|.
\end{equation}

\begin{theorem}\label{thm:main LDC cotype}
Fix $m,n\in \N$ and $\phi,\theta\in (0,1/2)$. Suppose that there exists a $3$-query locally decodable code $C:\{-1,1\}^m\to \{-1,1\}^n$ of quality $(\phi,\theta)$. Then for every seminorm $\n$ on $\R^{3n}\otimes \R^{3n}\otimes \R^{3n}$ and every $q\in [2,\infty)$ we have
\begin{equation}\label{eq:COS}
\frac{m^{1/q}}{\log (n+1)}\lesssim \frac{C_\n^{(q)}\cdot \O_\n}{\S_\n(\theta/8,\phi\theta /27)}.
\end{equation}
\end{theorem}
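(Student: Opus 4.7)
The plan is to extract from the LDC a single vector-valued function $f : \{-1,1\}^m \to \R^{3n} \otimes \R^{3n} \otimes \R^{3n}$ whose $m$ degree-one Walsh coefficients each lie in the biased set $S(\theta/8, \phi\theta/27)$ of~\eqref{eq:B set}, and then to sandwich these coefficients between the cotype inequality~\eqref{eq:def cotype} from below and Pisier's $K$-convexity bound~\eqref{eq:pisier K convex bound quote} from above. Concretely, I would first apply Lemma~\ref{lem:no alg def} to obtain $C':\{-1,1\}^m \to \{-1,1\}^{3n}$, triples of permutations $\pi_i,\sigma_i,\tau_i \in S_{3n}$, and signs $\d_i^j \in \{-1,1\}$ satisfying~\eqref{eq:LDC def}, and set $f(\e) := C'(\e)\otimes C'(\e)\otimes C'(\e)$. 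Note that $\|f(\e)\| \le \O_\n$ pointwise by the definition~\eqref{eq:defO}.

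\textbf{The key bias computation.} Fix $i \in \{1,\ldots,m\}$ and $j \in \{1,\ldots,\lceil \phi\theta n/9\rceil\}$, and consider
$$Y_{i,j}(\e) \eqdef \d_i^j\, \e_i\cdot C'(\e)_{\pi_i(j)}\, C'(\e)_{\sigma_i(j)}\, C'(\e)_{\tau_i(j)} \in \{-1,1\}.$$
Equation~\eqref{eq:LDC def} gives $\Prob[Y_{i,j}=1] \ge 1/2+\theta/16$, hence $\E[Y_{i,j}] \ge \theta/8$. Expanding $\widehat{f}(\{i\}) = \E[\e_i\, C'(\e)\otimes C'(\e)\otimes C'(\e)]$ coordinate-wise, this yields
$$\left|\bigl\langle e_{\pi_i(j)}\otimes e_{\sigma_i(j)}\otimes e_{\tau_i(j)},\, \widehat{f}(\{i\})\bigr\rangle\right| \ge \theta/8.$$
Because $\lceil \phi\theta n/9\rceil = \lceil (\phi\theta/27)\cdot 3n\rceil$, and the same triple $(\pi_i,\sigma_i,\tau_i)$ witnesses all these inequalities simultaneously, we conclude that $\widehat{f}(\{i\}) \in S(\theta/8,\phi\theta/27)$. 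Therefore $\|\widehat{f}(\{i\})\| \ge \S_\n(\theta/8,\phi\theta/27)$ for every $i \in \{1,\ldots,m\}$.

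\textbf{Finishing and main obstacle.} Applying the cotype inequality~\eqref{eq:def cotype} to the $m$ vectors $\{\widehat{f}(\{i\})\}_{i=1}^m$, and noting that $\Rad(f)(\e) = \sum_{i=1}^m \e_i \widehat{f}(\{i\})$, gives
$$m^{1/q}\cdot \S_\n(\theta/8,\phi\theta/27) \le \left(\sum_{i=1}^m \|\widehat{f}(\{i\})\|^q\right)^{1/q} \le C_\n^{(q)}\sqrt{\E\bigl[\|\Rad(f)(\e)\|^2\bigr]}.$$
Pisier's bound~\eqref{eq:pisier K convex bound quote}, applied with $\dim(\R^{3n}\otimes\R^{3n}\otimes\R^{3n}) = 27n^3$, gives
$$\sqrt{\E\bigl[\|\Rad(f)(\e)\|^2\bigr]} \lesssim \log(n+1)\cdot \sqrt{\E\bigl[\|f(\e)\|^2\bigr]} \le \log(n+1)\cdot \O_\n,$$
and chaining these and rearranging produces~\eqref{eq:COS}. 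The only genuinely substantive step is realizing that a \emph{single} Fourier coefficient $\widehat{f}(\{i\})$ is simultaneously biased on $\lceil \phi\theta n/9\rceil$ prescribed tensor coordinates laid out by one common triple of permutations; this is exactly the combinatorial upgrade packaged by Lemma~\ref{lem:no alg def}, which converts the LDC's probabilistic correctness into the disjoint family $\F_i$ that can be synchronously enumerated by $(\pi_i,\sigma_i,\tau_i)$. Everything else is a textbook combination of the cotype inequality with the $K$-convexity estimate using $\dim \lesssim n^3$.
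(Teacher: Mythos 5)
Your proposal is correct and follows essentially the same route as the paper: the same tensor-cubed function $f(\e)=C'(\e)\otimes C'(\e)\otimes C'(\e)$ built from Lemma~\ref{lem:no alg def}, the same bias computation showing $\widehat{f}(\{i\})\in S(\theta/8,\phi\theta/27)$ (your $\E[Y_{i,j}]\ge\theta/8$ is exactly the paper's $P_i^j-(1-P_i^j)\ge\theta/8$), and the same sandwich between the cotype inequality and Pisier's $K$-convexity bound with $\dim=(3n)^3$, so $\log((3n)^3+1)\lesssim\log(n+1)$.
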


\begin{proof}  Let $C':\{-1,1\}^m\to \{-1,1\}^{3n}$ be the function from Lemma~\ref{lem:no alg def}. Define $f:\{-1,1\}^m\to \R^{3n}\otimes \R^{3n}\otimes \R^{3n}$ by $$f(\e)= C'(\e)\otimes C'(\e)\otimes C'(\e).$$ Recalling the definition~\eqref{eq:defO}, we have $\|f(\e)\|\le \O_\n$ for all $\e\in \{-1,1\}^m$. Combined with Pisier's bound on the $K$-convexity constant~\eqref{eq:pisier K convex bound quote}, we therefore have
\begin{multline}\label{eq:use cotype}
\log\left((3n)^3+1\right)\cdot \O_\n\gtrsim \left(\E\left\|\sum_{i=1}^{m} \e_i\hat{f}(\{i\})\right\|^2\right)^{1/2}\\\ge \frac{1}{C_\n^{(q)}}\left(\sum_{i=1}^m\left\|\hat{f}(\{i\})\right\|^q\right)^{1/q},
\end{multline}
where in the last step of~\eqref{eq:use cotype} we used the definition of the cotype $q$ constant $C_\n^{(q)}$.

Using the notation of Lemma~\ref{lem:no alg def}, fix $i\in \{1,\ldots,m\}$ and for every $j\in \{1,\ldots,\lceil \phi\theta n/9\rceil\}$ write
\begin{equation}\label{eq:def Pij}
P_i^j\eqdef \Prob\left[\left\langle  e_{\pi_i\left(j\right)}\otimes e_{\sigma_i\left(j\right)}\otimes e_{\tau_i\left(j\right)},f(\e)\right\rangle =\d_i^j\e_i\right],
\end{equation}
where the probability in~\eqref{eq:def Pij} is over $\e\in \{-1,1\}^m$ chosen uniformly at random. Recalling the definition of $f$, it follows from~\eqref{eq:LDC def} that
\begin{equation}\label{eq:def prob assumptions}
\forall (i,j)\in \{1,\ldots,m\}\times \{1,\ldots,\lceil \phi\theta n/9\rceil\},\quad P_i^j\ge \frac{1}{2}+\frac{\theta}{16}.
\end{equation}
Now, for every $(i,j)\in \{1,\ldots,m\}\times \{1,\ldots,\lceil \phi\theta n/9\rceil\}$ we have
\begin{eqnarray}\label{eq:redo expectation computation}
&&\!\!\!\!\!\!\!\!\!\!\!\!\!\!\!\!\!\!\!\!\!\!\!\!\!\!\!\!\!\!\!\!\!\!\nonumber\d_i^j \left\langle e_{\pi_i\left(j\right)}\otimes e_{\sigma_i\left(j\right)}\otimes e_{\tau_i\left(j\right)},\hat{f}(\{i\})\right\rangle
\\&=&\nonumber \left\langle \d_i^j e_{\pi_i\left(j\right)}\otimes e_{\sigma_i\left(j\right)}\otimes e_{\tau_i\left(j\right)},\E\left[\e_i f(\e)\right]\right\rangle
\\&=&\nonumber
\E\left[\d_i^j\e_i\left\langle  e_{\pi_i\left(j\right)}\otimes e_{\sigma_i\left(j\right)}\otimes e_{\tau_i\left(j\right)},f(\e)\right\rangle\right]\\&\stackrel{\eqref{eq:def Pij}}{=}&P_i^j-\left(1-P_i^j\right)\stackrel{\eqref{eq:def prob assumptions}}{\ge} \frac{\theta}{8}.
\end{eqnarray}
Recalling~\eqref{eq:B set}, it follows from~\eqref{eq:redo expectation computation} that $\hat{f}(\{i\})\in S(\theta/8,\phi\theta /27)$ for all $i\in \{1,\ldots,m\}$. The definition~\eqref{eq:defS} therefore implies that
$$
\min_{i\in \{1,\ldots, m\}}\left\|\hat{f}(\{i\})\right\|\ge \S_\n(\theta/8,\phi\theta /27),
$$
which gives the desired estimate~\eqref{eq:COS} due to~\eqref{eq:use cotype}.
\end{proof}

By substituting~\eqref{eq:efremenko} into Theorem~\ref{thm:main LDC cotype} we deduce that any seminorm on $\R^{n}\otimes \R^{n}\otimes \R^{n}$ must obey the following nontrivial restriction.

\begin{corollary}\label{cor:n_m} There exist universal constants $\alpha,\beta, c\in (0,1)$ such that for every integer  $n> 15$, if $\n$ is a seminorm on $\R^{n}\otimes \R^{n}\otimes \R^{n}$ and $q\in [2,\infty)$ then
\begin{equation}\label{eq:lower general norm}
\frac{C_\n^{(q)}\cdot \O_\n}{\S_\n(\alpha,\beta)}\gtrsim \frac{1}{\log n}\cdot\exp\left(\frac{c}{q}\cdot \frac{(\log \log n)^2}{\log\log\log n}\right),
\end{equation}
\end{corollary}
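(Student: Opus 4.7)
My plan is to derive the corollary by substituting Efremenko's bound~\eqref{eq:efremenko} into Theorem~\ref{thm:main LDC cotype}. Fix universal constants $\phi,\theta\in(0,1/2)$ within Efremenko's range (for concreteness $\phi=1/24$, $\theta=1/2-6\phi=1/4$); the corollary's $\alpha,\beta$ will be suitable universal constants built from $\theta/8$ and $\phi\theta/27$, possibly shrunk slightly to absorb the bookkeeping below.

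Given $n>15$, set $n_c\eqdef\lceil n/3\rceil$, and choose $m\in\N$ as large as possible so that Efremenko's construction (after standard padding by constant bits to match the required output length, at the cost of a constant factor in the corruption parameter that is absorbed into our universal constants) produces a $(\phi,\theta)$-quality $3$-query LDC $C:\{-1,1\}^m\to\{-1,1\}^{n_c}$. Inverting the defining relation $\log\log n_c\asymp\sqrt{\log m\log\log m}$ via the elementary implication ``$u\log u\lesssim v^2$ implies $u\lesssim v^2/\log v$'' (applied with $u=\log m$ and $v=\log\log n$) gives $\log m\gtrsim (\log\log n)^2/\log\log\log n$, hence
$$m^{1/q}\gtrsim \exp\!\left(\frac{c}{q}\cdot\frac{(\log\log n)^2}{\log\log\log n}\right)$$
for some universal $c>0$.

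Theorem~\ref{thm:main LDC cotype} applied to this code yields an estimate for seminorms on $(\R^{3n_c})^{\otimes 3}$. When $3\mid n$ and $n_c=n/3$, this coincides with $(\R^n)^{\otimes 3}$ and the corollary follows after rearranging and using $\log(n_c+1)\asymp\log n$. Otherwise $3n_c>n$, and I would instead apply the theorem to the seminorm $\nu$ on $(\R^{3n_c})^{\otimes 3}$ defined by $\nu(x)\eqdef\|Px\|$, where $P:(\R^{3n_c})^{\otimes 3}\to(\R^n)^{\otimes 3}$ denotes coordinate projection in each tensor factor. Standard monotonicity gives $C_\nu^{(q)}\le C_\n^{(q)}$ and $\O_\nu=\O_\n$ (since $P$ sends each sign tensor $\e^{\otimes 3}$ to $(P\e)^{\otimes 3}$ with $P\e\in\{-1,1\}^n$), while a permutation-restriction argument (analogous to the one in the proof of Theorem~\ref{thm:main LDC cotype}) transfers $\S_\nu(\theta/8,\phi\theta/27)$ to $\S_\n(\alpha,\beta)$ for a $\beta$ slightly smaller than $\phi\theta/27$, in order to absorb the at-most-six constraints lost when restricting witnessing permutations from $S_{3n_c}$ to partial permutations of $\{1,\ldots,n\}$.

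The only substantive step is the inversion of Efremenko's asymptotic relation. The main (minor) obstacle I anticipate is the mod-$3$ bookkeeping together with the constant-sized loss in the $\S$ comparison, which I expect to be absorbed into the appropriate choice of the universal constants $\alpha,\beta,c$ valid in the regime $n>15$.
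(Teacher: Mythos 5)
Your proposal is correct and takes essentially the same route as the paper: substitute Efremenko's bound~\eqref{eq:efremenko} into Theorem~\ref{thm:main LDC cotype}, invert $\log\log n\asymp\sqrt{\log m\log\log m}$ to get $\log m\gtrsim (\log\log n)^2/\log\log\log n$ (note the direction you actually need is ``$v^2\lesssim u\log u\Rightarrow u\gtrsim v^2/\log v$''), pull the given seminorm back under a coordinate projection, which preserves $C_\n^{(q)}$ and $\O_\n$ and costs only a slight decrease of $\beta$ in $\S_\n$, and absorb small $n$ into the constant $c$. The only divergence is bookkeeping: you pad the code up to length $\lceil n/3\rceil$ at a constant-factor loss in $\phi$, while the paper keeps Efremenko's lengths $n_m$, invokes the density of that sequence to find $3n_m$ just above $n$, and relaxes $\beta_1$ to $\beta_1/2$ in the projection step--two equally (in)formal ways of handling the same dimension mismatch.
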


\begin{proof} By Efremenko's bound~\eqref{eq:efremenko} combined with Theorem~\ref{thm:main LDC cotype}, there exist universal constants $\alpha_1,\beta_1,c_1\in (0,1)$ and a sequence of integers $\{n_m\}_{m=3}^\infty\subseteq \N$ satisfying
\begin{equation}\label{eq:the sequence is dense}
\log\log n_m\asymp\sqrt{\log m \log\log m},
 \end{equation}
 such that for every integer  $m\ge 3$ and $q\in [2,\infty)$, if $|\cdot|$  is a seminorm on $\R^{3n_m}\otimes\R^{3n_m}\otimes \R^{3n_m}$ then
\begin{equation}\label{eq:triple norm}
 \frac{C_{|\cdot|}^{(q)}\cdot \O_{|\cdot|}}{ \S_{|\cdot|}(\alpha_1,\beta_1)}\gtrsim \frac{m^{1/q}}{\log n_m}\stackrel{\eqref{eq:the sequence is dense}}{\gtrsim} \frac{1}{\log n_m}\cdot\exp\left(\frac{c_1}{q}\cdot \frac{(\log \log n_m)^2}{\log\log\log n_m}\right).
\end{equation}

Due to~\eqref{eq:the sequence is dense}, there is $N_0\in \N$ such that if $n\ge N_0$ then there exists an integer $m\ge 3$ for which
\begin{equation}\label{eq:n in between}
 3\left(1-\frac{\beta_1}{7}\right)n_m\le n\le 3n_m.
 \end{equation}
 Observe that  by adjusting the constant $c$, the desired asymptotic inequality~\eqref{eq:lower general norm} holds true if $n\in (15,N_0)$. We may therefore assume that $n\ge N_0$, in which case we apply~\eqref{eq:triple norm} to the trivial extension of $\|\cdot\|$ to a seminorm $|\cdot|$ on  $\R^{3n_m}\otimes\R^{3n_m}\otimes \R^{3n_m}$, i.e., for every $\{a_{i,j,k}\}_{i,j,k=1}^{3n_m}$ set
$$
\left|\sum_{i=1}^{3n_m}\sum_{j=1}^{3n_m}\sum_{k=1}^{3n_m}a_{i,j,k} e_i\otimes e_j\otimes e_k\right|\eqdef \left\|\sum_{i=1}^{n}\sum_{j=1}^{n}\sum_{k=1}^{n}a_{i,j,k} e_i\otimes e_j\otimes e_k\right\|.
$$
 Then $C_{|\cdot|}^{(q)}=C_{\|\cdot\|}^{(q)}$ and $\O_{|\cdot|}=\O_{\|\cdot\|}$, and due to~\eqref{eq:n in between} we also have $\S_{|\cdot|}(\alpha_1,\beta_1)\ge \S_{\|\cdot\|}(\alpha_1,\beta_1/2)$. The desired estimate~\eqref{eq:lower general norm} is therefore a consequence of~\eqref{eq:triple norm}.
\end{proof}

Due to the following simple lemma, Theorem~\ref{thm:tensor symmetric} and Corollary~\ref{coro:cotype const lower}  follow from Theorem~\ref{thm:main LDC cotype} and  Corollary~\ref{cor:n_m}, respectively.

\begin{lemma}
Fix $n\in \N$ and $\alpha,\beta\in (0,1)$. For $K\in [1,\infty)$, if $\n$ is a $K$-tensor-symmetric norm on  $\R^{n}\otimes \R^{n}\otimes \R^{n}$ then
\begin{equation}\label{eq:O upper K}
\O_\n\le K\left\|\sum_{i=1}^n\sum_{j=1}^n\sum_{k=1}^n e_i\otimes e_j\otimes e_k\right\|.
\end{equation}
and
\begin{equation}\label{eq:S upper K}
\S_\n(\alpha,\beta)\ge \frac{\alpha\beta}{K}\left\|\sum_{i=1}^ne_i\otimes e_i\otimes e_i\right\|.
\end{equation}
\end{lemma}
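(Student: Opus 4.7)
The bound~\eqref{eq:O upper K} on $\O_\n$ would follow at once from the definition of $K$-tensor-symmetry. For any $\e \in \{-1,1\}^n$, I would apply~\eqref{eq: def tensor symmetric} with $\pi=\sigma=\tau=\id$, sign triples $(\e,\e,\e)$, and coefficients $a_{i,j,k} \equiv 1$, yielding $\|\e\otimes\e\otimes\e\| \le K\bigl\|\sum_{i,j,k} e_i\otimes e_j\otimes e_k\bigr\|$; maximizing over $\e$ gives~\eqref{eq:O upper K}.

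For the lower bound on $\S_\n(\alpha,\beta)$, the plan is to fix an arbitrary $x = \sum a_{i,j,k}\, e_i\otimes e_j\otimes e_k \in S(\alpha,\beta)$ certified by permutations $\pi,\sigma,\tau \in S_n$ with $|a_{\pi(l),\sigma(l),\tau(l)}| \ge \alpha$ for $l = 1,\ldots,\lceil\beta n\rceil$, and then to exhibit a single randomized instance of~\eqref{eq: def tensor symmetric} whose expected image of $x$ is a positive scalar multiple of $\sum_i e_i\otimes e_i\otimes e_i$. Since each realization produces a tensor of norm at most $K\|x\|$ and the norm is convex, the expected tensor will also have norm at most $K\|x\|$, directly yielding~\eqref{eq:S upper K}.

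Concretely, I would introduce a uniformly random $\rho \in S_n$ together with independent uniform signs $\xi,\zeta \in \{-1,1\}^n$, and set $\lambda_l \eqdef \sign(a_{\pi(l),\sigma(l),\tau(l)})$ (with $\lambda_l=1$ when the entry vanishes). I would then apply~\eqref{eq: def tensor symmetric} with permutations $(\pi\circ\rho, \sigma\circ\rho, \tau\circ\rho)$ and sign vectors $\e_i = \xi_i$, $\d_j = \zeta_j$, $\eta_k = \xi_k\zeta_k\lambda_{\rho(k)}$. Conditioning on $\rho$ and using the independence of $\xi$ and $\zeta$ gives
\[
\E\bigl[\e_i\d_j\eta_k\,\big|\,\rho\bigr] \;=\; \E[\xi_i\xi_k]\,\E[\zeta_j\zeta_k]\,\lambda_{\rho(k)} \;=\; \delta_{i,k}\,\delta_{j,k}\,\lambda_{\rho(k)},
\]
so that every off-diagonal entry of the expected tensor vanishes. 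On the diagonal $i=j=k$ one is left with $\E_\rho[\lambda_{\rho(i)}\,a_{\pi\rho(i),\sigma\rho(i),\tau\rho(i)}] = \tfrac{1}{n}\sum_l |a_{\pi(l),\sigma(l),\tau(l)}|$, a constant $D$ independent of $i$ satisfying $D \ge \alpha\lceil\beta n\rceil/n \ge \alpha\beta$. The expected tensor therefore equals $D\sum_i e_i\otimes e_i\otimes e_i$, and the convexity-plus-tensor-symmetry bound translates into $\|x\| \ge (D/K)\bigl\|\sum_i e_i\otimes e_i\otimes e_i\bigr\| \ge (\alpha\beta/K)\bigl\|\sum_i e_i\otimes e_i\otimes e_i\bigr\|$.

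The main subtlety will be engineering the joint distribution on signs and permutations so that (i) off-diagonal expectations cancel and (ii) all diagonal expectations are the same nonnegative number; this must be achieved in a single application of~\eqref{eq: def tensor symmetric} so that only one factor of $K$ is spent. The device I plan to rely on is to concentrate the sign correction $\lambda_{\rho(\cdot)}$ in the third sign vector $\eta$ alone: on the diagonal this produces $\e_i\d_i\eta_i = \xi_i^2\zeta_i^2\lambda_{\rho(i)} = \lambda_{\rho(i)}$, exactly the sign needed to convert $a_{\pi\rho(i),\sigma\rho(i),\tau\rho(i)}$ into its absolute value, while off the diagonal the independent $\xi$- and $\zeta$-averaging decouples the three slots through the Kronecker deltas and annihilates every contribution.
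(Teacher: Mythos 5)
Your proposal is correct and takes essentially the same route as the paper: fix $x\in S(\alpha,\beta)$, apply the $K$-tensor-symmetry to a randomly signed and permuted copy of $x$ so that each realization has norm at most $K\|x\|$, and use convexity of the norm to pass to the expectation, which is a scalar multiple (at least $\alpha\beta$) of $\sum_{i=1}^n e_i\otimes e_i\otimes e_i$. The only cosmetic difference is that the paper applies the sign correction $\e_k\d_k\sign(a_{\pi\circ\rho(k),\sigma\circ\rho(k),\tau\circ\rho(k)})$ only when $\rho(k)\le\lceil\beta n\rceil$ and uses a fresh independent sign vector $\eta$ elsewhere, whereas you apply the correction at every position using just two independent sign vectors; both choices make the off-diagonal terms vanish and give a diagonal coefficient bounded below by $\alpha\beta$.
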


\begin{proof}
\eqref{eq:O upper K} is an immediate consequence of the definitions~\eqref{eq: def tensor symmetric} and~\eqref{eq:defO}. Next, fix $x\in S(\alpha,\beta)$. Writing $x=\sum_{i=1}^n\sum_{j=1}^n\sum_{k=1}^n a_{i,j,k} e_i\otimes e_j \otimes e_k$ for some $\{a_{i,j,k}\}_{i,j,k=1}^n\subseteq \R$, and recalling~\eqref{eq:B set}, there exist $\pi,\sigma,\tau\in S_n$ such that
\begin{equation}\label{eq:bigger than alpha}
\forall\, i\in \left\{1,\ldots,\lceil \beta n\rceil\right\},\quad \left|a_{\pi(i),\sigma(i),\tau(i)}\right|\ge \alpha.
\end{equation}
For $\e,\d,\eta\in \{-1,1\}^n$ and $\rho\in S_n$  define
\begin{align}\label{eq:define new vectors correlated}
x_{\e,\d,\eta}^\rho\eqdef \sum_{i=1}^n\sum_{j=1}^n\sum_{k=1}^{n} \e_i\d_j s_k(\e,\d,\eta,\rho)  a_{\pi\circ\rho(i),\sigma\circ\rho(j),\tau\circ\rho(k)} e_i\otimes e_j\otimes e_k,
\end{align}
where
\begin{align*}\label{eq:definition of s}
s_k(\e,\d,\eta,\rho) \eqdef \begin{cases}
\eta_k & \mbox{if } \rho(k)>\lceil \beta n \rceil, \\
\eps_k \delta_k \sign(a_{\pi \circ \rho(k),\sigma\circ \rho(k),\tau\circ \rho(k)}) & \mbox{otherwise}.
\end{cases}
\end{align*}
Thus, if $(\e,\d,\eta,\rho)\in \{-1,1\}^n\times  \{-1,1\}^n\times  \{-1,1\}^n\times S_n$ is chosen uniformly at random then
\begin{equation}\label{eq:new vector identity}
\E\left[x_{\e,\d,\eta}^\rho\right]=\frac{\sum_{i=1}^{\lceil \beta n\rceil}\left|a_{\pi(i),\sigma(i),\tau(i)}\right|}{n}\sum_{j=1}^n e_j\otimes e_j\otimes e_j.
\end{equation}
Consequently,
\begin{multline}\label{eq:all x}
\alpha\beta \left\|\sum_{j=1}^n e_j\otimes e_j\otimes e_j\right\|\stackrel{\eqref{eq:bigger than alpha}\wedge\eqref{eq:new vector identity}}{\le} \left\|\E\left[x_{\e,\d,\eta}^\rho\right]\right\|\\\le \E\left[\left\|x_{\e,\d,\eta}^\rho\right\|\right]\stackrel{\eqref{eq: def tensor symmetric}\wedge \eqref{eq:define new vectors correlated}}{\le} K\|x\|.
\end{multline}
Recalling~\eqref{eq:defS}, the validity of~\eqref{eq:all x} for all $x\in S(\alpha,\beta)$ implies~\eqref{eq:S upper K}. \end{proof}

\subsection*{Acknowledgements} We are very grateful to Gilles Pisier for his encouragement and many insightful suggestions.

\bibliographystyle{abbrv}
\bibliography{tensor-cotype}
\end{document}